\renewcommand{\to}[1][]{\xrightarrow{\ #1\ }}
\newcommand{\forget}[1]{}  %this just DOES NOT print anything that is written in { } after \forget
\renewcommand{\theenumi}{\@roman\c@enumi}
\renewcommand{\phi}{\varphi}
\renewcommand{\epsilon}{\varepsilon}
\renewcommand{\theta}{\vartheta}
\newcommand{\llbracket}{[\negthinspace[}
\newcommand{\rrbracket}{]\negthinspace]}
\def\ZZ{{\mathbf Z}}
\def\QQ{{\mathbf Q}}
\def\cF{\mathcal{F}}
\def\cO{\mathcal{O}}
\def\o{\circ}
\def\.{\cdot}
\def\({\Big{(}}
\def\){\Big{)}}
\def\^{\widehat}
\def\~{\widetilde}
\def\*{{}^*\!}
\def\[{\llbracket}
\def\]{\rrbracket}
\renewcommand{\and}{ \quad \text{and} \quad }
\DeclareMathOperator{\Pic}{Pic}
\DeclareMathOperator{\Cart}{Cart}
\DeclareMathOperator{\Bs}{Bs}
\DeclareMathOperator{\SB}{\mathbf{SB}}
\DeclareMathOperator{\BB}{\mathbf{B}_{+}}
\newtheorem{lemma}{Lemma}[section]
\newtheorem{theorem}[lemma]{Theorem}
\newtheorem{corollary}[lemma]{Corollary}
\theoremstyle{definition}
\newtheorem{remark}[lemma]{Remark}
\theoremstyle{remark}
\newtheorem*{remark*}{Remark}
\newtheorem*{note*}{Note}
\begin{document}

\title{The augmented base locus in positive characteristic}

\thanks{2010\,\emph{Mathematics Subject Classification}.  Primary 14A15; Secondary 14E99.
  \newline The first author was partially supported by an EPSRC grant, the second author
  was partially supported by NSF research grant no: 0701101, and the third author was
  partially supported by NSF research grant no: 1068190 and by a Packard Fellowship.}

\keywords{Stable base locus, augmented base locus, big and nef line bundle}

\author[P.~Cascini]{Paolo~Cascini}
\address{Department of Mathematics, Imperial College London, London SW7 2AZ, UK}
\email{{p.cascini@imperial.ac.uk}}

\author[J.~M$^{\rm c}$Kernan]{James~M\textsuperscript{c}Kernan}
\address{Department of Mathematics, MIT, 77 Massachusetts Avenue, 
Cambridge, MA 02139, USA}
\email{{mckernan@math.mit.edu}}

\author[M.~Musta\c{t}\u{a}]{Mircea~Musta\c{t}\u{a}}
\address{Department of Mathematics, University of Michigan,
Ann Arbor, MI 48109, USA}
\email{{mmustata@umich.edu}}

\dedicatory{To Slava Shokurov, with admiration, on the~occasion of
his~sixtieth~birthday}

\begin{abstract}
Let $L$ be a nef line bundle on a projective scheme $X$ in positive characteristic.  We
prove that the augmented base locus of $L$ is equal to the union of the irreducible closed
subsets $V$ of $X$ such that $L\vert_V$ is not big. For a smooth variety in characteristic
zero, this was proved by Nakamaye using vanishing theorems.
\end{abstract}

\maketitle

\markboth{P.~CASCINI, J.~M\textsuperscript{c}KERNAN, AND M.~MUSTA\c{T}\u{A}}{THE AUGMENTED BASE LOCUS IN POSITIVE CHARACTERISTIC}

\section{Introduction}

Let $X$ be a projective scheme over an algebraically closed field $k$, and $L$ a line
bundle on $X$. The base locus $\Bs(L)$ of $L$ is the closed subset of $X$ consisting of
those $x\in X$ such that every section of $L$ vanishes at $x$.  It is easy to see that if
$m_1$ and $m_2$ are positive integers such that $m_1$ divides $m_2$, then $\Bs
(L^{m_2})\subseteq \Bs(L^{m_1})$.  It follows from the Noetherian property that $\Bs(L^m)$
is independent of $m$ if $m$ is divisible enough; this is the \emph{stable base locus}
$\SB(L)$ of $L$.

The stable base locus is a very interesting geometric invariant of $L$, but it is quite
subtle: for example, there are numerically equivalent Cartier divisors whose stable base
loci are different.  Nakamaye introduced in \cite{Nakamaye} the following upper
approximation of $\SB(L)$, the \emph{augmented base locus} $\BB(L)$. If $L\in\Pic(X)$ and
$A\in \Pic(X)$ is ample, then
$$\BB(L):=\SB(L^m\otimes A^{-1}),$$
for $m\gg 0$. It is easy to check that this is well-defined, it is independent of $A$, and only depends on the numerical equivalence class of $L$.
The following is our main result.

\begin{theorem}\label{thm_main}
Let $X$ be a projective scheme over an algebraically closed field of positive characteristic.
If $L$ is a nef line bundle on $X$, then $\BB(L)$ is equal to $L^{\perp}$, the union of all
irreducible closed subsets $V$ of $X$ such that $L\vert_V$ is not big.
\end{theorem}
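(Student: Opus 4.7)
The containment $L^{\perp}\subseteq\BB(L)$ is elementary and characteristic-free. If $V$ is an irreducible closed subset with $V\not\subseteq\BB(L)$, then for some ample $A$ and some integers $m,k\geq 1$ there is a section of $L^{mk}\otimes A^{-k}$ not vanishing on $V$. On $V$ this exhibits $L^{mk}\vert_{V}\sim A^{k}\vert_{V}+E$ with $E$ effective, so $L\vert_{V}$ is big.

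For the essential containment $\BB(L)\subseteq L^{\perp}$, it suffices to show that every irreducible component $V$ of $\BB(L)$ has $L\vert_{V}$ not big. I would argue by contradiction: assuming $L\vert_{V}$ is big, the goal is to produce a section of some $L^{m}\otimes A^{-1}$ whose restriction to $V$ is nonzero, contradicting $V\subseteq\SB(L^{m}\otimes A^{-1})\supseteq\BB(L)$ for $m\gg 0$. Let $W$ be the union of the remaining components of $\BB(L)$, so that $V\not\subseteq W$ and $\dim(V\cap W)<\dim V$.

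The mechanism is cohomological. The ideal-sheaf sequence
\[
0\to\cI_{V\cup W}\to\cI_{W}\to\cI_{V\cap W,V}\to 0,
\]
where $\cI_{V\cap W,V}\subseteq\cO_{V}$ denotes the image of $\cI_{W}$, twisted by $L^{m}\otimes A^{-1}$ yields
\[
H^{0}(X,\cI_{W}\otimes L^{m}\otimes A^{-1})\to H^{0}(V,\cI_{V\cap W,V}\otimes L^{m}\vert_{V}\otimes A^{-1}\vert_{V})\to H^{1}(X,\cI_{V\cup W}\otimes L^{m}\otimes A^{-1}).
\]
Bigness of $L\vert_{V}$ gives $h^{0}(V,L^{m}\vert_{V}\otimes A^{-1}\vert_{V})\sim c\,m^{\dim V}$, while on $V\cap W$ the analogous dimension grows only like $O(m^{\dim(V\cap W)})=o(m^{\dim V})$; subtracting via the restriction sequence on $V$, the middle group is nonzero for $m\gg 0$. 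Lifting any such section to the left-hand group produces a section of $L^{m}\otimes A^{-1}$ vanishing on $W$ but not on $V$, so the task reduces to controlling the right-hand $H^{1}$ obstruction.

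The main obstacle — the whole content of the theorem in positive characteristic — is producing this cohomological control without the Kawamata--Viehweg vanishing on a log resolution that Nakamaye used. I would substitute Fujita's vanishing theorem, valid in any characteristic: for any ample $H$ and coherent $\cF$ on $X$, there exists $m_{0}$ such that $H^{i}(X,\cF\otimes H^{m}\otimes N)=0$ for $i>0$, $m\geq m_{0}$, and every nef line bundle $N$. Applied with $\cF=\cI_{V\cup W}\otimes A^{-1}$, $H=A$, and $N=L^{m}$ (nef because $L$ is), Fujita yields $H^{1}(X,\cI_{V\cup W}\otimes A^{k-1}\otimes L^{m})=0$ for $k\geq k_{0}$, which falls one ample power short of what is needed. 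The hard step is closing this gap: my plan is to exploit the Frobenius morphism, replacing the situation by its pullback under $F^{e}$ (so that $L$ becomes $L^{p^{e}}$ and $A$ becomes $A^{p^{e}}$) and using the factorization $A^{p^{e}}=A\otimes A^{p^{e}-1}$, so that the high power $A^{p^{e}-1}$ supplies the positivity Fujita requires while the remaining $A$ cancels the original $A^{-1}$. An induction on $\dim X$ applied to $W$ — a projective scheme of smaller dimension to which the theorem may be assumed known — will likely also be needed to handle the nested scheme structure of $\BB(L)$, and the subtlest point will be ensuring that the nonzero section on $V$ survives these Frobenius manipulations.
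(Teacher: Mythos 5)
Your first inclusion $L^{\perp}\subseteq\BB(L)$ is correct and is the same argument the paper gives. The reverse inclusion is where the proposal has a genuine gap, at exactly the point you flag as the hard step: the Frobenius manipulation you describe does not close the deficit of ample twists left by Fujita's theorem. To rule out $V\subseteq\BB(L)$ you need a section of $L^{M}\otimes A^{-1}$ (some large $M$) not vanishing identically on $V$, and your obstruction is $H^{1}(X,\cI\otimes L^{M}\otimes A^{-1})$ for the ideal sheaf $\cI$ of a possibly high-codimension subscheme. Fujita vanishing applies to twists of the form (fixed coherent sheaf)$\,\otimes\,$(large power of an ample)$\,\otimes\,$(nef), and $L^{M}\otimes A^{-1}$ is never of that form when $L$ is merely nef: restricted to a curve inside $L^{\perp}$ it has negative degree for every $M$, so no rewriting as $A^{k}\otimes(\text{nef})$ with $k\geq k_0$ is possible. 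Pulling back by $F^{e}$ replaces $\cI$ by $\cI^{[p^{e}]}$ and $L^{M}\otimes A^{-1}$ by $L^{Mp^{e}}\otimes A^{-p^{e}}$, so the negative ample twist grows at the same rate as everything else; the factorization $A^{p^{e}}=A\otimes A^{p^{e}-1}$ puts the large power $A^{p^{e}-1}$ with a \emph{negative} exponent in the bundle whose sections you need, so it supplies no positivity to Fujita. There is no vanishing theorem, in any characteristic, for $H^{1}$ of an arbitrary ideal sheaf twisted by nef-minus-fixed-ample, and this group genuinely need not vanish; Nakamaye's characteristic-zero argument works only because he replaces $\cI$ by multiplier-ideal-type sheaves coming from Kawamata--Viehweg on a resolution, which is precisely what is unavailable here.

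The paper's proof never lifts sections from a high-codimension locus. Its only lifting statement (Lemma~\ref{key_lemma}) concerns sections on an \emph{effective Cartier divisor} $D$ with $L(-D)$ ample: after pulling back by $F^{e}$, the kernel of the restriction map is the line bundle $L^{mp^{e}}(-p^{e}D)=(L^{m}(-D))^{p^{e}}$, a large power of a fixed ample line bundle, and ordinary Serre vanishing kills the $H^{1}$. The essential mechanism, which your setup lacks, is that Frobenius multiplies the \emph{subtracted divisor} by $p^{e}$, so the positivity of the kernel improves with $e$ rather than staying one (or $k_{0}$) ample steps short. From this the paper deduces $\BB(L)=\BB(L\vert_{D})$ (Theorem~\ref{thm_main2}), extends it to $Z=Z(s)$ for $s\in H^{0}(X,L^{m}\otimes A^{-1})$ chosen via the dimension count of Lemma~\ref{lem3} to be nonzero on every component where $L$ is big (Corollary~\ref{cor_thm_main2}), and then descends to $L^{\perp}$ by Noetherian induction on such divisorial cuts. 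To salvage your line of attack you would have to make the obstruction sheaf divisorial in this sense before invoking any vanishing theorem.
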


We note that since $L$ is nef, for an irreducible closed subset $V$ of $X$, the restriction
$L\vert_V$ is not big if and only if $V$ has positive dimension and $(L\vert_V^{\dim(V)})=0$.
When $X$ is a smooth projective variety in characteristic zero, the above theorem was proved in
\cite{Nakamaye}, making use of the Kawamata-Viehweg vanishing theorem. 
It is an interesting question whether the result holds in characteristic zero when the variety is singular. 

The proof of Theorem~\ref{thm_main} makes use in an essential way of the Frobenius morphism.
The following is a key ingredient in the proof.

\begin{theorem}\label{thm_main2}
Let $X$ be a projective scheme over an algebraically closed field of positive characteristic.
If $L$ is a nef line bundle on $X$ and $D$ is an effective Cartier divisor such that
$L(-D)$ is ample, then
$\BB(L)=\BB(L\vert_D)$.
\end{theorem}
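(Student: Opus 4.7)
The plan is to prove $\BB(L) = \BB(L|_D)$ by establishing both inclusions, using the ample line bundle $A := L(-D)$. Since $A|_D$ is then ample on $D$, both loci can be computed as $\BB(L) = \SB(L^m \otimes A^{-1})$ and $\BB(L|_D) = \SB(L^m|_D \otimes A^{-1}|_D)$ for $m \gg 0$. The inclusion $\BB(L|_D) \subseteq \BB(L)$ is immediate by restricting sections: any $s \in H^0(X, (L^m \otimes A^{-1})^r)$ nonvanishing at $x \in D$ restricts to such a section on $D$.

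For the reverse inclusion, I would show that $x \notin \BB(L|_D) \Rightarrow x \notin \BB(L)$, splitting into cases. If $x \notin D$, rewrite $L^m \otimes A^{-1} = L^{m-1}(D)$, and combine a section $t \in H^0(A^{m-1})$ with $t(x) \neq 0$ (available for large $m$ by ampleness) with the $m$-th power of the canonical section $s_D$ of $\cO(D)$ (nonvanishing at $x$ since $x \notin D$); the product $t \cdot s_D^m \in H^0(L^{m-1}(D))$ is nonvanishing at $x$, so $x \notin \BB(L)$.

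The substantive case is $x \in D \setminus \BB(L|_D)$: pick $s \in H^0(D, L^{ma}|_D \otimes A^{-a}|_D)$ with $s(x) \neq 0$ for some $m, a \geq 1$. The crucial characteristic-$p$ input is that $p^e$-th powers descend to a well-defined map $\cO_D \to \cO_{p^e D}$: for two lifts $\tilde s, \tilde s + f$ of $s$ to $\cO_{p^e D}$ with $f \in \cI_D/\cI_D^{p^e}$, the Freshman's Dream gives $(\tilde s + f)^{p^e} = \tilde s^{p^e} + f^{p^e} = \tilde s^{p^e}$ since $f^{p^e} \in \cI_D^{p^e} = 0$. This produces $s^{p^e} \in H^0(p^e D, L^{map^e}|_{p^e D} \otimes A^{-ap^e}|_{p^e D})$. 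For $e$ large, choose $u \in H^0(X, A^{ap^e - 1})$ with $u(x) \neq 0$; then $\sigma := s^{p^e} \cdot u|_{p^e D}$ is a section of $L^{map^e}|_{p^e D} \otimes A^{-1}|_{p^e D}$ nonvanishing at $x$.

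Lifting $\sigma$ from $p^e D$ to $X$ via
$$0 \to L^{map^e} \otimes A^{-1}(-p^e D) \to L^{map^e} \otimes A^{-1} \to L^{map^e} \otimes A^{-1}|_{p^e D} \to 0$$
requires the vanishing of $H^1(L^{map^e} \otimes A^{-1}(-p^e D))$. Using $\cO(-D) = A \otimes L^{-1}$, this group equals $H^1(L^{(ma-1)p^e} \otimes A^{p^e - 1})$, which vanishes by Fujita's vanishing theorem (valid in all characteristics) once $e$ is large, since $L^{(ma-1)p^e}$ is nef and $A^{p^e - 1}$ is a sufficiently high power of the ample bundle $A$. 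The lifted section witnesses $x \notin \SB(L^{map^e} \otimes A^{-1}) = \BB(L)$ for $e$ large. The main obstacle is the case $x \in D$, where $s$ cannot be directly extended to an infinitesimal thickening; the Frobenius trick circumvents this, and together with Fujita vanishing it replaces the Kawamata--Viehweg vanishing used in Nakamaye's characteristic-zero argument.
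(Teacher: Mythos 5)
Your proof is correct and follows essentially the same route as the paper: the Frobenius power of a section on $D$ descends to the thickening $p^eD$ (your Freshman's Dream computation is exactly the paper's Frobenius pull-back of sections), and one lifts from $p^eD$ to $X$ because the relevant $H^1$ is that of a huge power of an ample bundle. The only differences are cosmetic bookkeeping: the paper normalizes so that $L(-D)\simeq A^2$ and multiplies by a section of $A^{r-1}\vert_D$ \emph{before} taking Frobenius powers (so that plain Serre vanishing for powers of a fixed ample bundle suffices in its key lemma), whereas you multiply by $u\in H^0(X,A^{ap^e-1})$ afterwards and invoke Fujita vanishing, which also works.
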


In the proofs of the above results we make use of techniques introduced by Keel in \cite{Keel}. In fact,
if we replace in Theorem~\ref{thm_main2} the two augmented base loci by the corresponding
stable base loci, we recover one of the main results in \cite{Keel}. We give a somewhat simplified proof of this result (see Corollary~\ref{cor2_thm_Keel} below), and this proof extends to give also
Theorem~\ref{thm_main2}.

In the next section we recall some basic facts about 
augmented base loci. The proofs of Theorems~\ref{thm_main2} and \ref{thm_main} are then given in \S 3. 

\subsection{Acknowledgment}
We are indebted to Rob Lazarsfeld for discussions that led to some of the results in this
paper.  We would also like to thank Se\'an Keel for several very useful discussions and the
referee for some useful comments.

\section{Augmented base loci and big line bundles}

In this section we review some basic facts about the augmented base locus. This notion is
usually defined for integral schemes. However, even if one is only interested in this
restrictive setting, for the proof of Theorem~\ref{thm_main} we need to also consider
possibly reducible, or even non-reduced schemes. We therefore define the augmented base
locus in the more general setting that we will need. Its general properties follow as in
the case of integral schemes, for which we refer to \cite{ELMNP}.

Let $X$ be a projective scheme over an algebraically closed field $k$. If $L$ is a line
bundle on $X$ and $s\in H^0(X,L)$, then we denote by $Z(s)$ the zero-locus of $s$ (with
the obvious scheme structure). Note that $Z(s)$ is defined by a locally principal ideal,
but in general it is not an effective Cartier divisor (if $X$ is reduced, then $Z(s)$ is
an effective Cartier divisor if and only if no irreducible component of $X$ is contained
in $Z(s)$). The base locus of $L$ is by definition the closed subset of $X$ given by
$$\Bs(L):=\bigcap_{s\in H^0(X,L)}Z(s)_{\rm red}.$$

If $m$ is a positive integer and $s\in H^0(X,L)$, then it is clear that
$Z(s)_{\rm red}=Z(s^{\otimes m})_{\rm red}$, hence $\Bs(L^m)\subseteq \Bs(L)$. 
More generally, we have $\Bs(L^{mr})\subseteq \Bs(L^r)$ for every $m,r\geq 1$,
hence by the Noetherian property there is $m_0\geq 1$ such that
$$\SB(L):=\bigcap_{r\geq 1}\Bs(L^r)$$
is equal to $\Bs(L^m)$ whenever $m$ is divisible by $m_0$. The closed subset
$\SB(L)$ of $X$ is the \emph{stable base locus} of $L$. It follows by definition that
$\SB(L)=\SB(L^r)$ for every $r\geq 1$. 

Since $X$ is projective, every line bundle is of the form $\cO_X(D)$, for some Cartier divisor 
$D$ (see \cite{Nakai}). We will sometimes find it convenient to work with Cartier divisors, rather than line bundles.  
Let $\Cart(X)_{\QQ}:=\Cart(X)\otimes_{\ZZ}\QQ$ denote the group of Cartier
$\QQ$-divisors and $\Pic(X)_{\QQ}:=\Pic(X)\otimes_{\ZZ}\QQ$.
For a Cartier divisor $D$ , we put $\SB(D)=\SB(\cO_X(D))$. 
Since $\SB(D)=\SB(rD)$ for every $r\geq 1$, the definition extends in the obvious way to
$\Cart(X)_{\QQ}$. 

Given a Cartier $\QQ$-divisor $D$, the augmented base locus of $D$ is
$$\BB(D):=\bigcap_A\SB(D-A),$$
where the intersection is over all ample Cartier $\QQ$-divisors on $X$.
It is easy to see that if $A_1$ and $A_2$ are ample Cartier $\QQ$-divisors such that
$A_1-A_2$ is ample, then $\SB(D-A_2)\subseteq\SB(D-A_1)$. It follows from the Noetherian
property that there is an ample Cartier $\QQ$-divisor $A$ such that
$\BB(D)=\SB(D-A)$. Furthermore, in this case if $A'$ is ample and $A-A'$ is ample, too,
then $\BB(D)=\SB(D-A')$. It is then clear that if $H$ is any ample Cartier divisor on $X$, then
for $m\gg 0$ we have
$$\BB(D)=\SB\left(D-\frac{1}{m}H\right)=\SB(mD-H).$$

The following properties of the augmented base locus are direct consequences of the definition
(see \cite[\S 1]{ELMNP}).
\begin{enumerate}
\item[1)] For every Cartier $\QQ$-divisor $D$, we have $\SB(D)\subseteq\BB(D)$.
\item[2)] If $D_1$ and $D_2$ are numerically equivalent Cartier $\QQ$-divisors, then 
$\BB(D_1)=\BB(D_2)$.
\end{enumerate}

If $D$ is a Cartier divisor and $L=\cO_X(D)$, we also write $\BB(L)$ for $\BB(D)$. 

\begin{lemma}\label{lem1}
If $L$ is a line bundle on the projective scheme $X$, and $Y$ is a closed subscheme 
of $X$, then
\begin{enumerate}
\item[i)] $\SB(L\vert_Y)\subseteq\SB(L)$.
\item[ii)] $\BB(L\vert_Y)\subseteq \BB(L)$.
\end{enumerate}
\end{lemma}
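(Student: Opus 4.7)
The plan is to prove (i) directly by restricting sections and then to bootstrap (ii) from (i) using the characterization $\BB(D) = \SB(mD - H)$ for a fixed ample $H$ on $X$ and $m \gg 0$.

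For (i), I would take a point $x \in Y$ with $x \notin \SB(L)$ and exhibit a section of some power of $L|_Y$ not vanishing at $x$. By definition of the stable base locus, there exists an $m$ (sufficiently divisible) and a section $s \in H^0(X, L^m)$ with $s(x) \neq 0$. The restriction map $H^0(X, L^m) \to H^0(Y, L^m|_Y)$ sends $s$ to a section $s|_Y$ whose zero locus on $Y$ is cut out by the pullback of the local equation of $Z(s)$; in particular $(s|_Y)(x) \neq 0$. Hence $x \notin \Bs(L^m|_Y) \supseteq \SB(L|_Y)$. Since $\SB(L|_Y) \subseteq Y$ by definition, this yields $\SB(L|_Y) \subseteq \SB(L)$.

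For (ii), I would fix an ample Cartier divisor $H$ on $X$ and write $L = \cO_X(D)$. Since the restriction of an ample divisor to a closed subscheme of a projective scheme is ample, $H|_Y$ is ample on $Y$. The criterion recalled in the excerpt gives, for all $m$ sufficiently large,
\[
\BB(L) = \SB(mD - H) \and \BB(L|_Y) = \SB\bigl(mD|_Y - H|_Y\bigr) = \SB\bigl((mD - H)|_Y\bigr).
\]
Applying (i) to the line bundle $\cO_X(mD - H)$ gives $\SB((mD - H)|_Y) \subseteq \SB(mD - H)$, and the two identifications above then yield $\BB(L|_Y) \subseteq \BB(L)$.

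No serious obstacle arises: the argument is formal once one notes two things that might be sources of confusion. First, $\Bs(L^m|_Y)$ is defined as an intersection over \emph{all} sections in $H^0(Y, L^m|_Y)$, not merely those arising by restriction from $X$, but this only makes the base locus on $Y$ smaller, so the inclusion goes through in the direction we need. Second, the characterization of $\BB$ via $\SB(mD - H)$ for $m \gg 0$ lets us work with a single pair $(m, H)$ simultaneously on $X$ and $Y$ (by choosing $m$ large enough that both limits have been reached), which avoids having to compare the two intersections $\bigcap_A \SB(D - A)$ taken over different ample cones on $X$ and on $Y$.
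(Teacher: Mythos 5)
Your proof is correct and follows essentially the same route as the paper: part (i) by restricting sections (so $Z(s|_Y)\subseteq Z(s)$, hence $\Bs(L^m|_Y)\subseteq\Bs(L^m)$), and part (ii) by fixing one ample divisor on $X$, noting its restriction to $Y$ is ample, and reducing to (i) via the $\SB(mD-H)$ description of the augmented base locus. The only cosmetic difference is that the paper uses the inclusion $\BB(L|_Y)\subseteq\SB(L^m|_Y\otimes A^{-1}|_Y)$ directly from the definition of $\BB$ as an intersection, rather than the equality for $m\gg 0$; both are valid.
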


\begin{proof}
The first assertion follows from the fact that if $s\in H^0(X,L)$, then $Z(s\vert_Y)\subseteq Z(s)$,
hence $\Bs(L^m\vert_Y)\subseteq \Bs(L^m)$ for every $m\geq 1$. 
For the second assertion, fix an ample line bundle $A$ on $X$, and let $m\gg 0$ be such that
$\BB(L)=\SB(L^m\otimes A^{-1})$. Since $A\vert_Y$ is ample on $Y$, using i) and the definition
of the augmented base locus of $L\vert_Y$, we obtain
$$\BB(L\vert_Y)\subseteq \SB(L^m\vert_Y\otimes A^{-1}\vert_Y)\subseteq\SB(L^m\otimes A^{-1})
=\BB(L).$$
\end{proof}

Recall that a line bundle $L$ on an integral $n$-dimensional scheme $X$ is \emph{big} if there is $C>0$ such that
$h^0(X,L^m)\geq Cm^{n}$ for $m\gg 0$. Equivalently, this is the case if and only if there are Cartier
divisors $A$ and $E$, with $A$ ample and $E$ effective, such that $L^m\simeq
\cO_X(A+E)$ for some $m\geq 1$. We refer to \cite[\S 2.2]{positivity} for basic facts about big 
line bundles on integral schemes.
The following lemma is well-known, but we include a proof for completeness.

\begin{lemma}\label{lem2}
Let $X$ be an $n$-dimensional projective scheme and $L$ a line bundle on $X$.
For every coherent sheaf $\cF$ on $X$, there is $C>0$ such that $h^0(X,\cF\otimes L^m)\leq Cm^n$ for every $m\geq 1$.
\end{lemma}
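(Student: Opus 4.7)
The plan is to bound $h^0(X,\cF\otimes L^m)$ from above by $h^0$ of $\cF$ twisted by powers of a fixed ample line bundle, and then invoke the standard polynomial bound in the ample case. To start, fix an ample line bundle $A$ on $X$. By Serre's theorem, $A^r\otimes L^{-1}$ is globally generated for $r\gg 0$; fix such an $r$. Since $\cF$ has only finitely many associated points $\eta_1,\dots,\eta_s$, and at each $\eta_i$ the evaluation map $H^0(X,A^r\otimes L^{-1})\to(A^r\otimes L^{-1})_{\eta_i}$ is surjective, one may choose a section $s\in H^0(X,A^r\otimes L^{-1})$ that does not vanish at any $\eta_i$: the sections vanishing at a given $\eta_i$ form a proper linear subspace of $H^0(X,A^r\otimes L^{-1})$, and over the (infinite) ground field a finite union of such subspaces cannot exhaust the whole space.

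Next, multiplication by $s^m\in H^0(X,A^{rm}\otimes L^{-m})$ gives a map
$$\cF\otimes L^m\longrightarrow\cF\otimes A^{rm}.$$
Since $s^m$ is a local generator at every associated point of $\cF$, this map is injective: its kernel is a subsheaf of $\cF$ whose associated points are contained in $\mathrm{Ass}(\cF)$, yet whose stalk vanishes at each of them, forcing the kernel to be zero. Taking global sections yields
$$h^0(X,\cF\otimes L^m)\leq h^0(X,\cF\otimes A^{rm}).$$

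It therefore suffices to bound $h^0(X,\cF\otimes A^k)$ by a constant multiple of $k^n$ for all $k\geq 1$. Snapper's theorem gives that $\chi(X,\cF\otimes A^k)$ is polynomial in $k$ of degree at most $\dim(\operatorname{Supp}\cF)\leq n$, while Serre vanishing provides $h^i(X,\cF\otimes A^k)=0$ for $i>0$ and $k\gg 0$; combining these, $h^0(X,\cF\otimes A^k)$ agrees with $\chi(X,\cF\otimes A^k)$ for $k$ large and hence is bounded by a polynomial of degree $\leq n$. For the remaining finitely many values of $k$, $h^0$ is just a bounded sequence of non-negative integers. Thus there is $C_0>0$ with $h^0(X,\cF\otimes A^k)\leq C_0 k^n$ for every $k\geq 1$, and the previous display gives $h^0(X,\cF\otimes L^m)\leq C_0(rm)^n$, yielding the theorem with $C=C_0 r^n$.

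The only delicate point is the first step: producing a section $s$ of $A^r\otimes L^{-1}$ that is a non-zero-divisor on $\cF$. This rests on Serre's global-generation theorem together with infiniteness of the base field, via the prime-avoidance argument above. Once this is in place the rest is a routine appeal to Snapper's theorem and Serre vanishing, plus trivial bookkeeping for small exponents.
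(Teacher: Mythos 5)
Your proof is correct and follows essentially the same strategy as the paper's: the paper writes $L\simeq A\otimes B^{-1}$ with $A,B$ very ample and, for each $m$, picks a section of $B^m$ avoiding the associated points of $\cF$ to get an injection $H^0(X,\cF\otimes L^m)\hookrightarrow H^0(X,\cF\otimes A^m)$, then invokes the polynomial bound for the ample case. Your variant (a single fixed non-zero-divisor section $s$ of $A^r\otimes L^{-1}$ and its powers, plus the explicit Snapper/Serre-vanishing justification) is just a slightly more detailed packaging of the same argument.
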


\begin{proof}
Let us write $L\simeq A\otimes B^{-1}$ for suitable very ample line bundles $A$ and $B$.
For every $m\geq 1$, the line bundle $B^m$ is very ample. By choosing a section
$s_m\in H^0(X,B^m)$ such that $Z(s_m)$ does not contain any of the associated subvarieties of $\cF$, we obtain an inclusion $H^0(X,\cF\otimes L^m)\hookrightarrow
H^0(X,\cF\otimes A^m)$. Since $h^0(X,\cF\otimes A^m)=P(m)$ for $m\gg 0$, where $P$
is a polynomial of degree $\leq n$, we obtain the assertion in the lemma.
\end{proof}

If $X$ is reduced, and $A$, $D$ are Cartier divisors on $X$ with $A$ ample and
$D$ effective, then the restriction of $\cO_X(A+D)$ to every irreducible component $Y$ of $X$ is big (note that 
the restriction $D\vert_{Y}$ is well-defined and gives an effective divisor on $Y$).
As a consequence of the next lemma, we will obtain a converse to this statement.

\begin{lemma}\label{lem3}
Let $X$ be a reduced projective scheme. Given line bundles $L$ and $A$ on $X$,
with $A$ ample, if
 $m\gg 0$ and $s\in H^0(X, L^m\otimes A^{-1})$ is general, 
then for every irreducible component $Y$ of $X$ such that  
 $L\vert_Y$ is big, we have $Y\not\subseteq Z(s)$. 
\end{lemma}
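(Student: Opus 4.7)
The plan is: for each irreducible component $Y$ of $X$ with $L\vert_Y$ big, I will show the restriction map $\rho_Y \colon H^0(X, L^m \otimes A^{-1}) \to H^0(Y, L^m\vert_Y \otimes A^{-1}\vert_Y)$ is nonzero for $m \gg 0$. Then $\ker \rho_Y$ --- which is exactly the space of sections $s$ with $Y \subseteq Z(s)$ --- is a proper linear subspace, and a general $s$ lies outside the finite union of these subspaces (as $X$ has only finitely many components). So everything reduces to showing the restriction maps are nontrivial for $m$ large.

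Fix $Y$ with $L\vert_Y$ big and set $Z := \overline{X \setminus Y}$. Reducedness of $X$ gives $\cI_{Y/X} \cap \cI_{Z/X} = 0$, since a local section of the intersection vanishes on $Y \cup Z = X$. Hence the composition $\cI_{Z/X} \hookrightarrow \cO_X \twoheadrightarrow \cO_Y$ is injective and identifies $\cI_{Z/X}$ with an ideal sheaf $J \subset \cO_Y$; moreover $\cI_{Y/X} \cdot \cI_{Z/X} \subseteq \cI_{Y/X} \cap \cI_{Z/X} = 0$, so this identification is actually as $\cO_X$-modules: $\cI_{Z/X} \simeq \iota_* J$ for $\iota \colon Y \hookrightarrow X$. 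The subscheme $V(J)$ is contained in $Y \cap Z$, hence has dimension strictly less than $\dim Y$ (since $Y$ is not contained in any other component). Tensoring with $L^m \otimes A^{-1}$ (locally free) and applying the projection formula,
$$
H^0\bigl(X, \cI_{Z/X} \otimes L^m \otimes A^{-1}\bigr) \simeq H^0\bigl(Y, J \otimes L^m\vert_Y \otimes A^{-1}\vert_Y\bigr),
$$
and this subspace of $H^0(X, L^m \otimes A^{-1})$ maps into $H^0(Y, L^m\vert_Y \otimes A^{-1}\vert_Y)$ under $\rho_Y$ via the honest inclusion $J \hookrightarrow \cO_Y$. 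So it suffices to exhibit a nonzero section in $H^0(Y, J \otimes L^m\vert_Y \otimes A^{-1}\vert_Y)$ for $m \gg 0$.

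To finish, the short exact sequence for $J$ on $Y$ tensored with $L^m\vert_Y \otimes A^{-1}\vert_Y$ gives
$$
h^0\bigl(Y, J \otimes L^m\vert_Y \otimes A^{-1}\vert_Y\bigr) \geq h^0\bigl(Y, L^m\vert_Y \otimes A^{-1}\vert_Y\bigr) - h^0\bigl(V(J), L^m\vert_{V(J)} \otimes A^{-1}\vert_{V(J)}\bigr).
$$
The subtracted term is $O(m^{\dim Y - 1})$ by Lemma~\ref{lem2}. For the first term, bigness of $L\vert_Y$ produces $k \geq 1$ and a nonzero section of $\cO_Y(k L\vert_Y - A\vert_Y)$; multiplication by this section embeds $H^0(Y, (m-k) L\vert_Y)$ into $H^0(Y, L^m\vert_Y \otimes A^{-1}\vert_Y)$, and bigness itself forces $h^0(Y, (m-k) L\vert_Y) \geq c\, m^{\dim Y}$ for $m \gg 0$. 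Hence the difference is positive for $m$ large, and $\rho_Y \neq 0$.

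The step I expect to be the main obstacle is the sheaf-theoretic identification $\cI_{Z/X} \simeq \iota_* J$ as $\cO_X$-modules: this is where reducedness of $X$ enters essentially (through $\cI_{Y/X} \cdot \cI_{Z/X} = 0$), and it is what allows the argument to be transferred from $X$ to a single component $Y$. Once this identification is in place, the remainder is a routine growth-rate bookkeeping powered by Lemma~\ref{lem2}.
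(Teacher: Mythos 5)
Your proof is correct and is essentially the paper's argument in a different packaging: the paper compares $Y$ with the union $W$ of the other components via the Mayer--Vietoris sequence $0\to\cO_X\to\cO_Y\oplus\cO_W\to\cO_{Y\cap W}\to 0$ and derives a contradiction from the same growth-rate comparison ($h^0(Y,L^m\otimes A^{-1}\vert_Y)\gtrsim m^{\dim Y}$ by bigness versus the $O(m^{\dim Y-1})$ bound of Lemma~\ref{lem2} on $Y\cap W$), while you phrase the same reduction directly through the identification $\cI_{Z/X}\simeq\iota_*J$. Both hinge on the identical use of reducedness and of Lemma~\ref{lem2}, so there is nothing substantively new or missing.
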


Note that since $A$ is ample, if $s\in H^0(X,L^m\otimes A^{-1})$
and $Y'$ is an irreducible component of $X$ (considered with the reduced scheme structure) such that $L\vert_{Y'}$ is not big, then $Y'\subseteq Z(s)$.

\begin{proof}[Proof of Lemma~\ref{lem3}]
Suppose that $Y$ is an irreducible component of $X$ (considered with the reduced
structure) such that $L\vert_Y$ is big, but such that for infinitely many $m$ we have
$Y\subseteq Z(s)$ for every $s\in H^0(X,L^m\otimes A^{-1})$.  If $W$ is the union of the
other irreducible components of $X$, also considered with the reduced scheme structure,
then we have an exact sequence
$$0\to \cO_X\to\cO_Y\oplus\cO_W\to\cO_{Y\cap W}\to 0,$$
where $Y\cap W$ denotes the (possibly non-reduced) scheme-theoretic intersection
of $Y$ and $W$. After
tensoring with $L^m\otimes A^{-1}$ and taking global sections, this induces the exact sequence
$$0\to H^0(X, L^m\otimes A^{-1})\to H^0(Y,L^m\otimes A^{-1}\vert_Y)\oplus
H^0(W,L^m\otimes A^{-1}\vert_W)$$
$$\to H^0(Y\cap W,L^m\otimes A^{-1}\vert_{Y\cap W}).$$
By assumption, the map $H^0(X, L^m\otimes
A^{-1})\to H^0(Y,L^m\otimes A^{-1}\vert_Y)$ is zero for infinitely many $m$, in which case
the above exact sequence implies
\begin{equation}\label{eq_lem3}
h^0(Y,L^m\otimes A^{-1}\vert_Y)\leq h^0(T, L^m\otimes A^{-1}\vert_{T}).
\end{equation}
Let $n=\dim(Y)$. 
Since  $\dim(T)\leq n-1$, it follows from Lemma~\ref{lem2} that we can find $C>0$ such that 
$$h^0(T, L^m\otimes A^{-1}\vert_{T})\leq C m^{n-1}$$
for all $m$. On the other hand, since $L\vert_Y$ is big, it is easy to see that there is $C'>0$ such that
$h^0(Y,L^m\otimes A^{-1}\vert_Y)\geq C'm^n$ for all $m\gg 0$. These two estimates
contradict (\ref{eq_lem3}) when $m\gg 0$.
\end{proof}

\begin{corollary}
Let $L$ be a line bundle on the reduced projective scheme $X$. If the restriction of $L$
to every irreducible component of $X$ is big, then for every ample line bundle $A$ and every
$m\gg 0$, the zero locus of a  general section in $H^0(X,L^m\otimes A^{-1})$ defines an
effective Cartier divisor on $X$. 
\end{corollary}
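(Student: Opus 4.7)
The plan is to observe that this corollary is essentially a direct consequence of Lemma~\ref{lem3} combined with the characterization of Cartier divisors on reduced schemes mentioned in the text: for a reduced projective scheme $X$, a zero locus $Z(s)$ is an effective Cartier divisor if and only if no irreducible component of $X$ is contained in $Z(s)$ (equivalently, the local equation for $Z(s)$ is a non-zerodivisor on every stalk).

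First I would recall this characterization and note that, by hypothesis, $L\vert_Y$ is big for \emph{every} irreducible component $Y$ of $X$ (not merely for some of them). This is exactly the condition needed so that Lemma~\ref{lem3} applies to every component uniformly.

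Next, I would apply Lemma~\ref{lem3} directly: since $L\vert_Y$ is big for each of the finitely many irreducible components $Y_1,\ldots,Y_r$ of $X$, there exists $m_0$ such that for every $m\geq m_0$ a general section $s\in H^0(X,L^m\otimes A^{-1})$ satisfies $Y_i\not\subseteq Z(s)$ for every $i=1,\ldots,r$ simultaneously (the statement of Lemma~\ref{lem3} already packages the genericity for all big components at once, so no separate intersection of finitely many open conditions is needed). Consequently, no irreducible component of $X$ is contained in $Z(s)$, and by the characterization above $Z(s)$ is an effective Cartier divisor on $X$.

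There is essentially no obstacle here; the corollary is a formal repackaging of Lemma~\ref{lem3} under the stronger hypothesis that \emph{all} components are big. The only point to double-check is that the genericity in Lemma~\ref{lem3} is uniform over all big components, which it is by inspection of the statement. One might additionally mention as a sanity check that $H^0(X,L^m\otimes A^{-1})$ is nonzero for $m\gg 0$ (which follows from bigness of $L\vert_Y$ on any component, together with the exact sequence used in the proof of Lemma~\ref{lem3}), so that there is indeed a general section to speak of.
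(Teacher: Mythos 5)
Your proposal is correct and matches the paper's intent exactly: the corollary is stated as an immediate consequence of Lemma~\ref{lem3} together with the fact that on a reduced scheme $Z(s)$ is an effective Cartier divisor if and only if it contains no irreducible component, and the paper accordingly gives no separate proof. Your reading that the genericity in Lemma~\ref{lem3} is uniform over all big components, and your sanity check that $H^0(X,L^m\otimes A^{-1})\neq 0$ for $m\gg 0$, are both accurate.
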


\section{Main results}

In this section we assume that all our schemes are of finite type over an algebraically
closed field $k$ of characteristic $p>0$. For such a scheme $X$ we denote by $F=F_X$
the absolute Frobenius morphism of $X$. This is the identity on the topological space, and it takes a section $f$ of $\cO_X$ to $f^p$. Note that $F$ is a finite morphism of schemes
(not preserving the structure of schemes over $k$). We will also consider the iterates 
$F^e$ of $F$, for $e\geq 1$.

Let us recall some basic facts concerning pull-back of line bundles, sections, and subschemes.
Suppose that $L$ is a line bundle on $X$ and $Z$ is a closed subscheme of $X$.
\begin{enumerate}
\item[1)] There is a canonical isomorphism of line bundles $(F^e)^*(L)\simeq L^{p^e}$.
\item[2)]  The scheme-theoretic inverse image
$Z^{[e]}:=(F^e)^{-1}(Z)$ is a closed subscheme of $X$ defined by the ideal $I_Z^{[p^e]}$, such that if $I_Z$ is locally generated by $(f_i)_i$,
then $I_Z^{[p^e]}$ is defined by $(f_i^{p^e})_i$. In particular, if $Y$ is another closed subscheme of $X$,
having the same support as $Z$, there is some $e$ such that $Y$ is a subscheme of 
$Z^{[e]}$.
\item[3)] If $s\in H^0(Z,L\vert_Z)$, then $(F^e)^*(s)$ is a section in $H^0(Z^{[e]},(F^e)^*(L)\vert_{Z^{[e]}})$,
whose restriction to $Z$ gets identified with $s^{\otimes p^e}\in H^0(Z,L^{p^e}\vert_Z)$.
\end{enumerate}

\begin{lemma}\label{lem5}
If $X$ is a projective scheme over $k$ and $L$ is a line bundle on $X$, then 
\item[i)] $\SB(L)=\SB(L\vert_{X_{\rm red}})$.
\item[ii)] $\BB(L)=\BB(L\vert_{X_{\rm red}})$.
\end{lemma}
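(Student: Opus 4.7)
The plan is to exploit the fact that the Frobenius kills nilpotents: since $X$ is Noetherian, the nilradical $\cN \subset \cO_X$ is nilpotent, so $\cN^N = 0$ for some $N$, and whenever $p^e \geq N$ we have $\cN^{[p^e]} = 0$. In other words, $X_{\rm red}^{[e]} = X$ as subschemes for all $e \gg 0$. This is the bridge between sections on $X_{\rm red}$ and sections on $X$.

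For (i), one inclusion, $\SB(L\vert_{X_{\rm red}}) \subseteq \SB(L)$, is the content of Lemma~\ref{lem1}(i) applied to $Y = X_{\rm red}$. For the reverse inclusion, I would take a point $x \notin \SB(L\vert_{X_{\rm red}})$ and produce, for some $M$, a section of $L^M$ on $X$ not vanishing at $x$. By hypothesis there exist $m \geq 1$ and $s \in H^0(X_{\rm red}, L^m\vert_{X_{\rm red}})$ with $s(x) \neq 0$. Choose $e$ large enough that $X_{\rm red}^{[e]} = X$; then property 3) of the Frobenius recalled at the beginning of \S 3 gives a section
$$(F^e)^*(s) \in H^0\bigl(X_{\rm red}^{[e]},\, (F^e)^*(L^m)\vert_{X_{\rm red}^{[e]}}\bigr) = H^0(X, L^{mp^e}).$$
Because $F^e$ is the identity on underlying topological spaces, the closed subset $Z((F^e)^*(s))$ coincides (set-theoretically) with $Z(s) \subset X_{\rm red}$, so $(F^e)^*(s)$ does not vanish at $x$. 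Hence $x \notin \Bs(L^{mp^e}) \supseteq \SB(L)$.

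For (ii), Lemma~\ref{lem1}(ii) provides $\BB(L\vert_{X_{\rm red}}) \subseteq \BB(L)$. For the other direction, fix an ample line bundle $A$ on $X$; for all $m \gg 0$ we have $\BB(L) = \SB(L^m \otimes A^{-1})$, and for all $m \gg 0$ we also have $\BB(L\vert_{X_{\rm red}}) = \SB\bigl((L^m \otimes A^{-1})\vert_{X_{\rm red}}\bigr)$, since $A\vert_{X_{\rm red}}$ is ample on $X_{\rm red}$. Choosing $m$ large enough that both hold, (i) applied to the line bundle $L^m \otimes A^{-1}$ finishes the proof.

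I do not expect any real obstacle: the only non-formal input is the Frobenius trick in (i), and the needed facts (that $X_{\rm red}^{[e]} = X$ for $e \gg 0$, that $(F^e)^*$ preserves nonvanishing of sections because it is a homeomorphism) are immediate from the conventions set up just before the lemma. The possible subtlety to double-check is that $(F^e)^*(s)$ is genuinely a global section on $X$ (not merely on $X_{\rm red}^{[e]}$) once $e$ is large, which is exactly the identity $X_{\rm red}^{[e]} = X$.
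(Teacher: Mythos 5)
Your proof is correct and follows essentially the same route as the paper: the reverse inclusions come from Lemma~\ref{lem1}, part i) is proved by pulling back a nonvanishing section by a high enough Frobenius power so that the nilpotent ideal defining $X_{\rm red}$ is killed (yielding a genuine section of $L^{mp^e}$ on $X$), and part ii) reduces to part i) applied to $L^m\otimes A^{-1}$. The one point you flag as a possible subtlety --- that $(F^e)^*(s)$ lives on all of $X$ once $J^{[p^e]}=0$ --- is exactly the step the paper uses, and your justification of it is sound.
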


\begin{proof}
The inclusions ``$\supseteq$" in both i) and ii) follow from Lemma~\ref{lem1}. Let us prove the
reverse implication in i). Let $m$ be such that $\SB(L\vert_{X_{\rm red}})=
\Bs(L^m\vert_{X_{\rm red}})$.
Given $x\in X$, suppose that $x\not\in \Bs(L^m\vert_{X_{\rm red}})$.
Consider $s\in H^0(X_{\rm red},L^m\vert_{X_{\rm red}})$ such that $x\not\in Z(s)$. Let $J$
denote the ideal defining $X_{\rm red}$, and let
$e\gg 0$ be such that $J^{[p^e]}=0$. In this case $(F^e)^*(s)$ gives a section 
in $H^0(X,L^{mp^e})$ whose restriction to $X_{\rm red}$ is equal to $s^{\otimes p^e}$.
In particular, $x\not\in Z((F^e)^*(s))$. We conclude that $x\not\in \Bs(L^{mp^e})$,
hence $x\not\in\SB(L)$. This completes the proof of i).

Let $A$ be an ample line bundle on $X$, and let $m\gg 0$ be such that
$\BB(L\vert_{X_{\rm red}})=\SB(L^m\otimes A^{-1}\vert_{X_{\rm red}})$
and $\BB(L)=\SB(L^m\otimes A^{-1})$. The assertion in ii) now follows by applying i) to
$L^m\otimes A^{-1}$.
\end{proof}

The following is a key result from \cite{Keel}. We give a different proof, that has the advantage that it will apply also when replacing the stable base loci by the augmented base loci.

\begin{theorem}\label{thm_Keel} 
If $L$ is a nef line bundle on a projective scheme $X$, and $D$ is an effective Cartier divisor on $X$
such that $L(-D)$ is ample, then
$$\SB(L)=\SB(L\vert_D).$$
\end{theorem}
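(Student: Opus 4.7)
The inclusion $\SB(L\vert_D) \subseteq \SB(L)$ is immediate from Lemma~\ref{lem1}(i). For the reverse inclusion, I would fix $x \in X \setminus \SB(L\vert_D)$ and produce, for some $m \geq 1$, a section of $L^m$ not vanishing at $x$. Split according to whether $x$ lies on $D$. When $x \notin D$, ampleness of $L(-D)$ gives, for $k \gg 0$, a section $\sigma$ of $L^k(-kD)$ with $\sigma(x) \neq 0$; tensoring with the canonical section of $\cO_X(kD)$ (which is non-zero at $x$ since $x \notin D$) yields a section of $L^k$ not vanishing at $x$, so $x \notin \SB(L)$.

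The essential case is $x \in D$. Fix $m$ divisible enough so that $\SB(L\vert_D) = \Bs(L^m\vert_D)$, and choose $s \in H^0(D, L^m\vert_D)$ with $s(x) \neq 0$. The strategy is to use an iterate of Frobenius $F^e$ on $X$ to lift a power of $s$: the pullback $(F^e)^*s \in H^0(D^{[e]}, L^{mp^e}\vert_{D^{[e]}})$ restricts to $s^{\otimes p^e}$ on $D$, and so does not vanish at $x$. From the short exact sequence
$$0 \to L^{mp^e}(-p^e D) \to L^{mp^e} \to L^{mp^e}\vert_{D^{[e]}} \to 0$$
(using that the ideal of $D^{[e]}$ in $X$ is locally $\cO_X(-p^e D)$), the obstruction to lifting $(F^e)^*s$ to a global section $\widetilde s \in H^0(X, L^{mp^e})$ lies in $H^1(X, L^{mp^e}(-p^e D))$. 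Any such lift restricts to $s^{\otimes p^e}$ on $D$ and therefore does not vanish at $x$, whence $x \notin \Bs(L^{mp^e}) \supseteq \SB(L)$.

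The heart of the argument, and the main obstacle, is the vanishing of this $H^1$ for $e$ sufficiently large. Here one writes
$$L^{mp^e}(-p^e D) \simeq \bigl(L(-D)\bigr)^{p^e} \otimes L^{(m-1)p^e},$$
which decomposes as a large power of an ample bundle times a nef bundle (using $L$ nef and $m \geq 1$). Fujita's vanishing theorem applied to the ample bundle $L(-D)$ provides an $e_0$ such that for every $e \geq e_0$ and every nef line bundle $N$ on $X$, the group $H^1\bigl(X, (L(-D))^{p^e} \otimes N\bigr)$ vanishes; taking $N = L^{(m-1)p^e}$ yields the required vanishing. This is where positive characteristic is decisive: Frobenius lets us freely enlarge the exponent of the ample factor in the obstruction cohomology, so that Fujita vanishing can be triggered while we are still producing an honest lift of some (Frobenius-twisted) power of the chosen section $s$.
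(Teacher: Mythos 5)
Your proof is correct and follows the same skeleton as the paper's: reduce to lifting via Lemma~\ref{lem1}, split on whether $x\in D$, pull back by $F^e$ to replace $D$ by the thickening $p^eD$ cut out by $\cO_X(-p^eD)$, and kill the obstruction group $H^1(X,L^{mp^e}(-p^eD))$ for $e\gg 0$. The only genuine divergence is in how that vanishing is obtained. You factor $L^{mp^e}(-p^eD)\simeq \bigl(L(-D)\bigr)^{p^e}\otimes L^{(m-1)p^e}$, which has a nef twist that varies with $e$, and so you must invoke Fujita's vanishing theorem to get vanishing uniformly over all nef twists. The paper instead groups the factors as $L^{mp^e}(-p^eD)\simeq\bigl(L^m(-D)\bigr)^{p^e}$ and observes that $L^m(-D)=L^{m-1}\otimes L(-D)$ is itself ample (nef tensor ample); the obstruction group is then $H^1$ of a large power of a single fixed ample line bundle, so ordinary asymptotic Serre vanishing suffices. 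Your route is perfectly valid --- Fujita vanishing is characteristic-free and applies to projective schemes --- but the paper's regrouping buys a more elementary argument, needing no input beyond Serre vanishing; your version buys nothing extra here, though the Fujita-style uniformity is the kind of statement one does need in more delicate variants of this argument. Everything else (the non-vanishing of $s^{\otimes p^e}$ at $x$, the identification of $D^{[e]}$ with $p^eD$, the case $x\notin D$) matches the paper.
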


We isolate the key point in the argument in a lemma that we will use several times.

\begin{lemma}\label{key_lemma}
Let $A$ be an ample line bundle on a projective scheme $X$, and $D$ an effective Cartier
divisor on $X$. If $L:=A\otimes\cO_X(D)$ is nef, then for every $m\geq 1$ and every section $s\in H^0(D,L^m\vert_D)$, there is $e\geq 1$ such that $s^{\otimes p^e}\in H^0(D,L^{mp^e}\vert_D)$ is the restriction
of a section in $H^0(X,L^{mp^e})$.
\end{lemma}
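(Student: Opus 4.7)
The plan is to convert the lifting problem on $D$ into a cohomology vanishing statement on $X$ via the Frobenius-twisted short exact sequence of ideal sheaves, and then to observe that once we pass from $s$ to $s^{\otimes p^e}$, the relevant obstruction sheaf becomes an ample power. For each $e \geq 1$, the scheme-theoretic inverse image $D^{[e]} = (F^e)^{-1}(D)$ is locally cut out by $f^{p^e}$ when $D$ is cut out by $f$, so it agrees with $p^e D$ as a Cartier divisor, and we have a short exact sequence
$$0 \to \cO_X(-p^e D) \to \cO_X \to \cO_{D^{[e]}} \to 0.$$
By property (3) of Frobenius recalled above, the pullback $(F^e)^*(s) \in H^0\bigl(D^{[e]}, L^{mp^e}|_{D^{[e]}}\bigr)$ restricts to $s^{\otimes p^e}$ on the closed subscheme $D \subseteq D^{[e]}$. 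It therefore suffices to lift $(F^e)^*(s)$ to a global section of $L^{mp^e}$ on $X$ for some $e \geq 1$.

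Tensoring the above exact sequence with $L^{mp^e}$ and taking the long exact sequence of cohomology, such a lift exists once $H^1\bigl(X, L^{mp^e}(-p^e D)\bigr) = 0$. The key computation uses the hypothesis $L = A \otimes \cO_X(D)$, which rewrites $\cO_X(-D) \simeq A \otimes L^{-1}$; therefore
$$L^{mp^e}(-p^e D) \simeq L^{p^e(m-1)} \otimes A^{p^e} \simeq \bigl(A \otimes L^{m-1}\bigr)^{p^e}.$$
Because $A$ is ample, $L$ is nef, and $m \geq 1$, the line bundle $M := A \otimes L^{m-1}$ is ample, so Serre vanishing provides some $e$ for which $H^1(X, M^{p^e}) = 0$.

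For such $e$ the section $(F^e)^*(s)$ extends to some $\tilde{s} \in H^0(X, L^{mp^e})$, and further restricting $\tilde{s}$ from $D^{[e]}$ down to $D$ yields $\tilde{s}|_D = s^{\otimes p^e}$, as required. The heart of the argument — and the only place where characteristic $p$ is essential — is the passage from $s$ to the Frobenius pullback $(F^e)^*(s)$: this replaces the subscheme $D$ by the thickening $D^{[e]}$ and, crucially, twists the obstruction sheaf by $A^{p^e}$. Without this twist, the obstruction would sit in $H^1$ of a sheaf involving only the nef bundle $L^{m-1}$, for which no general vanishing is available; with it, one reduces to routine Serre vanishing on the ample bundle $A \otimes L^{m-1}$.
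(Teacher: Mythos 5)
Your proof is correct and follows essentially the same route as the paper's: both arguments pull back the restriction sequence for $D$ along $F^e$ (your $D^{[e]}$ is exactly the divisor $p^eD$ appearing in the paper's Frobenius-pulled-back sequence), identify the obstruction as $H^1\bigl(X, L^{mp^e}(-p^eD)\bigr) = H^1\bigl(X, (L^{m-1}\otimes L(-D))^{p^e}\bigr)$ with $L^{m-1}\otimes L(-D)$ ample, and kill it by asymptotic Serre vanishing for $e\gg 0$. No gaps.
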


\begin{proof}

Consider the short exact sequence
$$0\to L^m(-D)\to L^m\to L^m\vert_D\to 0.$$
Pulling-back by $F^e$ gives the exact sequence
$$0\to L^{mp^e}(-p^eD)\to L^{mp^e}\to L^{mp^e}\vert_{p^eD}\to 0.$$
Note that $L^m(-D)=L^{m-1}\otimes L(-D)$ is ample, since $L$ is nef and $L(-D)$ is ample.
By asymptotic Serre vanishing, we conclude that for $e\gg 0$ we have
$H^1(X, L^{mp^e}(-p^eD))=0$, and therefore the restriction map
$$H^0(X,L^{mp^e})\to H^0(X,L^{mp^e}\vert_{p^eD})$$
is surjective. Therefore there is $t\in H^0(X,L^{mp^e})$ such that $t\vert_{p^eD}=
(F^e)^*(s)$. In this case the restriction of $t$ to $D$ is equal to $s^{\otimes p^e}$.
\end{proof}

\begin{proof}[Proof of Theorem~\ref{thm_Keel}]
It follows from Lemma~\ref{lem1} that it is enough to show that if 
$P$ is a point on $X$ that does not lie in $\SB(L\vert_D)$, then $P$ does not lie in
$\SB(L)$. If $P$ does not lie on $D$, then it is clear that $P\not\in \SB(L)$, since
$A:=L\otimes\cO_X(-D)$ is ample. 
On the other hand, if
 $P\in D$, let
$m\geq 1$ be such that there is a section $s\in H^0(D,L^m\vert_D)$, with $P\not\in Z(s)$. Since 
$Z(s^{\otimes p^e})_{\rm red}=Z(s)_{\rm red}$, in order to show that $P\not\in\SB(L)$ it is enough to show that for some $e$,
the section $s^{\otimes p^e}$ lifts to a section in $H^0(X, L^{mp^e})$.
This is a consequence of Lemma~\ref{key_lemma}.
\end{proof}

\begin{corollary}\label{cor1_thm_Keel}
Let $X$ be a reduced projective scheme. 
If $L$ and $A$ are line bundles on $X$, with $A$ ample and $L$ nef, and 
 $Z=Z(s)$ for some $s\in
H^0(X,L\otimes A^{-1})$, then $\SB(L)=\SB(L\vert_Z)$.
\end{corollary}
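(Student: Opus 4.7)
The inclusion $\SB(L\vert_Z)\subseteq\SB(L)$ is immediate from Lemma~\ref{lem1}(i). For the reverse inclusion I would fix $P\notin\SB(L\vert_Z)$ and show $P\notin\SB(L)$. If $P\notin Z$, then $s$ does not vanish at $P$; combining $s$ with a section of $A^N$ non-vanishing at $P$ (where $N$ is large enough that $A^N$ is globally generated) yields a section of $L^N$ non-vanishing at $P$, and hence $P\notin\SB(L)$.

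It remains to treat the case $P\in Z$. Pick $u\in H^0(Z,L^m\vert_Z)$ with $u(P)\neq 0$. Following the strategy of Lemma~\ref{key_lemma}, the plan is to show that $u^{\otimes p^e}$ lifts to a section of $L^{mp^e}$ on $X$ for some $e\gg 0$. Since $(F^e)^*(u)\in H^0(Z^{[e]},L^{mp^e}\vert_{Z^{[e]}})$ restricts to $u^{\otimes p^e}$ on $Z$, it is enough to establish $H^1(X,I_{Z^{[e]}}\cdot L^{mp^e})=0$ in the short exact sequence
$$0 \to I_{Z^{[e]}}\cdot L^{mp^e} \to L^{mp^e} \to L^{mp^e}\vert_{Z^{[e]}} \to 0.$$

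The main technical obstacle, and the new feature compared with Lemma~\ref{key_lemma}, is that $Z$ need not be an effective Cartier divisor: $s$ may vanish identically on some irreducible components of $X$, in which case $I_Z$ fails to be invertible. To handle this, let $X'$ be the union (with reduced structure) of those components of $X$ on which $s$ does not vanish identically, and let $I_{X'}\subseteq\cO_X$ be its ideal sheaf. Multiplication by $s^{p^e}$ gives a surjection $L^{(m-1)p^e}\otimes A^{p^e}\twoheadrightarrow I_{Z^{[e]}}\cdot L^{mp^e}$; since $X$ is reduced, a local annihilator computation identifies the kernel with $I_{X'}\cdot L^{(m-1)p^e}\otimes A^{p^e}$. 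Therefore $I_{Z^{[e]}}\cdot L^{mp^e}\simeq (L^{m-1}\otimes A)^{p^e}\vert_{X'}$. Since $L$ is nef and $A$ is ample, $(L^{m-1}\otimes A)\vert_{X'}$ is ample on $X'$, and Serre vanishing then delivers $H^1(X',((L^{m-1}\otimes A)\vert_{X'})^{p^e})=0$ for $e\gg 0$, as required.
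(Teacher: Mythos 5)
Your proof is correct, but it takes a genuinely different route from the paper's. The paper reuses its Lemma~\ref{key_lemma} as a black box: it splits $X$ into $X'$ (the components contained in $Z$) and $X''$ (the rest), notes that $Z\cap X''$ \emph{is} an effective Cartier divisor on $X''$ with $L\vert_{X''}(-Z\cap X'')$ ample, lifts $t^{\otimes p^e}\vert_{Z\cap X''}$ to $X''$ by that lemma, and then glues the result with $t^{\otimes p^e}\vert_{X'}$ (which makes sense because $X'\subseteq Z$ and $X$ is reduced). You instead avoid any decomposition-and-gluing by computing the relevant ideal sheaf directly: the identification of $\ker(\cdot s^{p^e})$ with $I_{X'}$ via the annihilator computation in a reduced ring (using $\mathrm{Ann}(f^{n})=\mathrm{Ann}(f)$ for $R$ reduced) shows that $I_{Z^{[e]}}\otimes L^{mp^e}$ is the pushforward of $\bigl((L^{m-1}\otimes A)\vert_{X'}\bigr)^{p^e}$ from the union $X'$ of components not contained in $Z$, so a single application of asymptotic Serre vanishing gives the surjectivity of $H^0(X,L^{mp^e})\to H^0(Z^{[e]},L^{mp^e}\vert_{Z^{[e]}})$. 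This is arguably cleaner --- it makes the mechanism of Lemma~\ref{key_lemma} work verbatim even when $Z$ fails to be Cartier, and the same computation would adapt to the augmented-base-locus analogue (Corollary~\ref{cor_thm_main2}), where the paper again resorts to gluing; the price is the small commutative-algebra verification, which you correctly flag as needing $X$ reduced. Two cosmetic points: in the case $P\notin Z$ you should tensor $s^{\otimes N}$ (not $s$) with a section of $A^N$ to land in $H^0(X,L^N)$, or simply observe $\SB(L)\subseteq\SB(L\otimes A^{-1})\subseteq Z$; and the kernel should be written $I_{X'}\cdot\bigl(L^{(m-1)p^e}\otimes A^{p^e}\bigr)$ to avoid ambiguity.
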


\begin{proof}
Let $X'$ be the union of the irreducible components of $X$ that are contained in 
$Z$, and let $X''$ be the union of the other components (we consider on both $X'$ and $X''$
the reduced scheme structures). 
If $X'=X$, then $Z=X$ and there is nothing to prove, 
while if $X'=\emptyset$, then $Z$ is an effective Cartier divisor and the assertion follows from 
Theorem~\ref{thm_Keel}. Therefore we may and will assume that both $X'$ and $X''$ are non-empty.

Using the fact that $A$ is ample and the definition of the stable base locus, we obtain
$\SB(L)\subseteq\SB(L\otimes A^{-1})\subseteq Z$.  As in the proof of
Theorem~\ref{thm_Keel}, we see that it is enough to show that if $t\in H^0(Z,L^m\vert_Z)$
for some $m$, then there is $e\geq 1$ such that $t^{\otimes p^e}$ can be lifted to a
section in $H^0(X,L^{mp^e})$.  By applying Lemma~\ref{key_lemma} to $X''$, $D=Z\cap X''$
and the ample line bundle $L\vert_{X''}\otimes\cO_{X''}(-D)$, we see that for some $e$ we
can lift $t^{\otimes p^e}\vert_{X''\cap Z}$ to a section $t''\in H^0(X'',
L^{mp^e}\vert_{X''})$. Since $X'\subseteq Z$, the restriction of $t''$ to $X''\cap X'$ is
equal to $t^{\otimes p^e}\vert_{X'\cap X''}$. Therefore we can glue $t^{\otimes
  p^e}\vert_{X'}$ with $t''$ to get a section in $H^0(X,L^{mp^e})$ lifting $t^{\otimes
  p^e}$.
\end{proof}

Recall that if $L$ is a nef line bundle on  the projective scheme $X$, then the
\emph{exceptional locus} $L^{\perp}$ is the union of all closed irreducible subsets
$V\subseteq X$ such that $L\vert_V$ is not big. Since $L$ is nef, this condition is equivalent
to the fact that $\dim(V)>0$ and $(L\vert_V^{\dim(V)})=0$.

\begin{remark}\label{rem1}
It is easy to see by induction on $\dim(X)$ that $L^{\perp}$ is a closed subset of $X$. 
Note first that if
$X_1,\ldots,X_r$ are the irreducible components of $X$ (with the reduced scheme structures), then clearly
$L^{\perp}=(L\vert_{X_1})^{\perp}\cup\ldots\cup (L\vert_{X_r})^{\perp}$. Therefore we may assume that $X$ is integral.
In this case, if $L$ is not big, then $L^{\perp}=X$. Otherwise, 
we can find an effective Cartier divisor $D$ and a positive integer $m$ such that
$L^m(-D)$ is ample. It is clear that if $L\vert_V$ is not big, then $V\subseteq D$. Therefore
$L^{\perp}=(L\vert_D)^{\perp}$, hence it is closed by induction.
\end{remark}

The following result is one of the main results from \cite{Keel}. As we will see, this is an easy consequence of Corollary~\ref{cor1_thm_Keel}.

\begin{corollary}\label{cor2_thm_Keel}
If $L$ is a nef line bundle on the projective scheme $X$, then 
$\SB(L)=\SB(L\vert_{L^{\perp}})$.
\end{corollary}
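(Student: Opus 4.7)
My plan is to combine Lemma~\ref{lem5}, Lemma~\ref{lem3}, and Corollary~\ref{cor1_thm_Keel} in an induction; the direction $\SB(L\vert_{L^{\perp}})\subseteq\SB(L)$ is immediate from Lemma~\ref{lem1}(i). First I would use Lemma~\ref{lem5}(i) to replace $X$ by $X_{\mathrm{red}}$, so that $X$ becomes reduced with irreducible components $X_1,\ldots,X_r$, and partition these into $X^{\mathrm{big}}$, the union of those $X_i$ with $L\vert_{X_i}$ big, and $X^{\mathrm{small}}$, the union of the rest. Every non-big component sits inside $L^{\perp}$, so the case $X^{\mathrm{big}}=\emptyset$ is trivial ($L^{\perp}=X$) and will serve as the base of the induction.

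Assuming $X^{\mathrm{big}}\neq\emptyset$, the key step is to apply Lemma~\ref{lem3} to a fixed ample line bundle $A$ on $X$: for $m\gg 0$, a general $s\in H^0(X,L^m\otimes A^{-1})$ has the property that $Z=Z(s)$ meets each big $X_i$ in a proper subscheme. At the same time, for each non-big $X_j$ the group $H^0(X_j,L^m\vert_{X_j}\otimes A^{-1}\vert_{X_j})$ already vanishes (a nonzero section would write $L^m\vert_{X_j}$ as ample plus effective, forcing bigness of $L\vert_{X_j}$), so $X_j\subseteq Z$ automatically. Corollary~\ref{cor1_thm_Keel} applied to $L^m$ and $A$, together with $\SB(L)=\SB(L^m)$, then gives $\SB(L)=\SB(L\vert_Z)$.

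To close, I would induct on $\dim X^{\mathrm{big}}$. After reducing $Z$ via Lemma~\ref{lem5}(i), its irreducible components are either the non-big $X_j$'s (which are \emph{not} big components of $Z$, since $L\vert_{X_j}$ is not big) or components of $Z\cap X_i$ for big $X_i$, each of dimension at most $\dim X_i-1\leq\dim X^{\mathrm{big}}-1$. Hence $\dim Z^{\mathrm{big}}<\dim X^{\mathrm{big}}$, and the inductive hypothesis applied to $L\vert_Z$ on $Z_{\mathrm{red}}$ yields $\SB(L\vert_Z)=\SB(L\vert_{(L\vert_Z)^{\perp}})$. Since $(L\vert_Z)^{\perp}\subseteq L^{\perp}$ by definition, Lemma~\ref{lem1}(i) provides $\SB(L\vert_{(L\vert_Z)^{\perp}})\subseteq\SB(L\vert_{L^{\perp}})$, and chaining the equalities gives the desired containment.

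The main obstacle is selecting the right inductive invariant. A naive induction on $\dim X$ is defeated because the top-dimensional non-big components of $X$ are forced into every $Z=Z(s)$, so $\dim Z=\dim X$ and no reduction occurs. Inducting on $\dim X^{\mathrm{big}}$ instead works cleanly: cutting with a general section of $L^m\otimes A^{-1}$ strictly shrinks each big component, while the non-big components (which contribute nothing to the invariant) are simply carried along; no new top-dimensional big components can appear inside the old non-big components. The remainder of the argument is routine bookkeeping with Lemma~\ref{lem1}.
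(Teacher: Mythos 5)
Your proof is correct and follows essentially the same route as the paper: reduce to the reduced case via Lemma~\ref{lem5}, split $X$ into the components where $L$ is big and where it is not, cut with a general section of $L^m\otimes A^{-1}$ furnished by Lemma~\ref{lem3}, and invoke Corollary~\ref{cor1_thm_Keel} to pass from $X$ to $Z$. The only cosmetic difference is the inductive bookkeeping --- the paper uses Noetherian induction on closed subschemes of $X$ and the exact equality $L^{\perp}=(L\vert_Z)^{\perp}$, whereas you induct on $\dim X^{\mathrm{big}}$ and use only the inclusion $(L\vert_Z)^{\perp}\subseteq L^{\perp}$ together with Lemma~\ref{lem1} --- and both versions close the argument.
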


\begin{proof}
Arguing by Noetherian induction, we may assume that the result holds for every proper
closed subscheme of $X$. Since $L^{\perp}=(L\vert_{X_{\rm red}})^{\perp}$, it follows from
Lemma~\ref{lem5} that we may assume that $X$ is reduced. If the restriction of $L$ to
every irreducible component of $X$ is not big, then $L^{\perp}=X$, and there is nothing to
prove.  From now on we assume that this is not the case, and let $X'$ and $X''$ be the
union of those irreducible components of $X$ on which the restriction of $L$ is not
(respectively, is) big. On both $X'$ and $X''$ we consider the reduced scheme
structures. Note that by assumption $X''$ is nonempty.

Consider an ample line bundle $A$ on $X$. 
It follows from Lemma~\ref{lem3} that if $m\gg 0$, there is a section $s\in H^0(X,L^m\otimes A^{-1})$ such that no irreducible component of $X''$ is contained in $Z=Z(s)$
(but such that $X'\subseteq Z$).
It is clear that if $V$ is an irreducible  closed subset of $X$ such that
$L\vert_V$ is not big, then $V\subseteq Z$. Therefore $L^{\perp}=
(L\vert_Z)^{\perp}$. Since $X''$ is nonempty, it follows that $Z\neq X$, hence the inductive assumption gives $\SB(L\vert_Z)=\SB(L\vert_{L^{\perp}})$.
On the other hand, Corollary~\ref{cor1_thm_Keel} gives 
$$\SB(L)=\SB(L^m)=\SB(L^m\vert_Z)=\SB(L\vert_Z),$$
which completes the proof.
\end{proof}

We can now prove the second theorem stated in the Introduction.

\begin{proof}[Proof of Theorem~\ref{thm_main2}]
We suitably modify the argument in the proof of Theorem~\ref{thm_Keel}. By Lemma~\ref{lem1},
it is enough to prove the inclusion $\BB(L)\subseteq\BB(L\vert_D)$. Furthermore,
 Lemma~\ref{lem5}  implies $\BB(L\vert_D)=\BB(L\vert_{2D})=\BB(L^2\vert_{2D})$
 and we have $\BB(L)=\BB(L^2)$, hence we may replace $L$ by $L^2$ and $D$ by $2D$ to assume that $L(-D)\simeq A^2$, for some
 ample line bundle $A$.
 
 Suppose that $P$ is a point that does not lie on $\BB(L\vert_D)$.
 If $P\not\in D$, since $L(-D)$ is ample, it follows that $P\not\in\BB(L)$. Hence from now on we may assume that $P\in D$. By assumption,
for $m\gg 0$ we have
 $P\not\in\SB(L^m\otimes A^{-1}\vert_D)$. Let us choose $r\geq 1$ such that there is
 $t\in H^0(D, L^{rm}\otimes A^{-r}\vert_D)$ with $P\not\in Z(t)$. Furthermore, since we may take
  $r$ large enough, we may assume that $A^{r-1}\vert_{D}$ is globally generated. Let
  $t'\in H^0(D, A^{r-1}\vert_D)$ be such that $P\not\in Z(t')$. Therefore $t\otimes t'\in
  H^0(D, L^{rm}\otimes A^{-1})$ is such that $P\not\in Z(t\otimes t')$. Note that
  $L^{rm}\otimes A^{-1}(-D)\simeq L^{rm-1}\otimes A$ is ample, since $L$ is nef and
  $A$ is ample. Therefore Lemma~\ref{key_lemma} implies that for some $e\geq 1$, the section $t^{\otimes p^e}\otimes {t'}^{\otimes p^e}$ 
 can be lifted to a section in $H^0(X, L^{rmp^e}\otimes A^{-p^e})$, and this section clearly
 does not vanish at $P$. This shows that $P\not\in\BB(L)$, and completes the proof of the theorem.
\end{proof}

\begin{corollary}\label{cor_thm_main2}
Let $X$ be a reduced projective scheme. If $L$ and $A$ are line bundles on $X$, with $L$ nef and
$A$ ample, and $Z=Z(s)$ for some $s\in H^0(X,L\otimes A^{-1})$, then
$\BB(L)=\BB(L\vert_Z)$.
\end{corollary}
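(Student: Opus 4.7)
My plan is to mimic the proof of Theorem~\ref{thm_main2} while grafting on the component-splitting trick from the proof of Corollary~\ref{cor1_thm_Keel}. By Lemma~\ref{lem1}(ii), $\BB(L\vert_Z)\subseteq\BB(L)$, so I need only prove $\BB(L)\subseteq\BB(L\vert_Z)$. First, let $X'$ (resp.\ $X''$) be the union of the irreducible components of $X$ contained in $Z$ (resp.\ not contained in $Z$), each with the reduced scheme structure. The cases $X=X'$ and $X'=\emptyset$ are immediate (in the latter, $Z$ is an effective Cartier divisor with $L(-Z)\cong A$ ample, and Theorem~\ref{thm_main2} applies). I therefore assume both $X'$ and $X''$ are nonempty; then $D:=Z\cap X''$ is an effective Cartier divisor on $X''$ with $L\vert_{X''}(-D)\cong A\vert_{X''}$.

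Now fix $P\notin\BB(L\vert_Z)$ and aim for $P\notin\BB(L)$. If $P\notin Z$, then $s(P)\neq 0$; since $\BB(L)\subseteq\SB(L\otimes A^{-1})$ by definition, the section $s\in H^0(X,L\otimes A^{-1})$ itself witnesses $P\notin\BB(L)$. So I may assume $P\in Z$. As in the proof of Theorem~\ref{thm_main2}, I will replace $L$, $A$, $s$ by $L^2$, $A^2$, $s\otimes s$: by Lemma~\ref{lem5} applied to $Z(s\otimes s)$ and the numerical invariance of $\BB$, this yields an equivalent statement and produces an ample line bundle $H$ with $A\cong H^2$, so that on $X''$ the relation becomes $L\vert_{X''}(-D)\cong H^2\vert_{X''}$.

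I will then choose $m\gg 0$ and $r$ large enough that $\BB(L\vert_Z)=\SB(L^m\otimes H^{-1}\vert_Z)$, that $H^{r-1}\vert_Z$ is globally generated, and that there exists $t\in H^0(Z,L^{mr}\otimes H^{-r}\vert_Z)$ with $t(P)\neq 0$; picking also $t'\in H^0(Z,H^{r-1}\vert_Z)$ with $t'(P)\neq 0$, I set $\tau:=t\otimes t'\in H^0(Z,L^{mr}\otimes H^{-1}\vert_Z)$. Applying Lemma~\ref{key_lemma} on $X''$ to the line bundle $L^{mr}\otimes H^{-1}\vert_{X''}$ and divisor $D$, the identity $L\vert_{X''}(-D)\cong H^2\vert_{X''}$ yields $(L^{mr}\otimes H^{-1})(-D)\vert_{X''}\cong L^{mr-1}\otimes H\vert_{X''}$, ample as the tensor of nef and ample. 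Hence for some $e\geq 1$, the section $(\tau\vert_D)^{\otimes p^e}$ lifts to $\tau''\in H^0(X'',L^{mr p^e}\otimes H^{-p^e}\vert_{X''})$.

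Because $X'\subseteq Z$ forces $X'\cap X''\subseteq D$ scheme-theoretically, the sections $\tau''$ and $(\tau\vert_{X'})^{\otimes p^e}$ both restrict to $(\tau\vert_{X'\cap X''})^{\otimes p^e}$ on the overlap, so the exact sequence $0\to\cO_X\to\cO_{X'}\oplus\cO_{X''}\to\cO_{X'\cap X''}\to 0$ glues them to a section $\sigma\in H^0(X,L^{mr p^e}\otimes H^{-p^e})$ with $\sigma(P)\neq 0$. Tensoring by a section of $H^{p^e-2}$ not vanishing at $P$ (available for $e$ large by global generation of ample powers) will produce a section of $L^{mr p^e}\otimes A^{-1}$ that does not vanish at $P$, forcing $P\notin\SB(L^{mr p^e}\otimes A^{-1})\supseteq\BB(L)$, as required. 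The main obstacle is arranging the ampleness of $(L^{mr}\otimes H^{-1})(-D)\vert_{X''}$: without the initial doubling of $L$, $A$, $s$, the analogous twist collapses to the merely nef bundle $L^{mr-1}\vert_{X''}$, so the squaring is essential for producing the extra ample factor $H$, exactly as in the proof of Theorem~\ref{thm_main2}.
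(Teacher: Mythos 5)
Your proposal is correct and follows essentially the same route as the paper: reduce to one inclusion via Lemma~\ref{lem1}, split $X$ into the components $X'$ contained in $Z$ and the rest $X''$, double $L$, $A$, $s$ to extract a square root $H$ of $A$, run the $t\otimes t'$ construction from the proof of Theorem~\ref{thm_main2} on $Z$, lift a Frobenius power over $X''$ via Lemma~\ref{key_lemma}, and glue with the restriction to $X'$ using $X'\cap X''\subseteq Z\cap X''$. The only (harmless and unnecessary) deviation is your final twist by a section of $H^{p^e-2}$ to land in $L^{mrp^e}\otimes A^{-1}$; the paper concludes directly from the section of $L^{mrp^e}\otimes H^{-p^e}$, since $H^{p^e}$ is itself ample.
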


\begin{proof}
We modify slightly the argument in the proof of Theorem~\ref{thm_main2}, along the lines in the proof of Corollary~\ref{cor1_thm_Keel}. By Lemma~\ref{lem1}, it is enough to show that if
$P\not\in\BB(L\vert_Z)$, then $P\not\in\BB(L)$. Let $X'$ be the union of the irreducible components of $X$ that are contained in $Z$, and $X''$ the union of the other components,
both considered with the reduced scheme structures. If $X'=X$, then $Z=X$ and there is nothing to prove, while if $X'=\emptyset$, then $Z$ is an effective Cartier divisor, and the assertion follows from Theorem~\ref{thm_main2}. From now on, we assume that both $X'$ and $X''$ are nonempty.

After replacing $L$ and $A$ by $L^2$ and $A^2$, respectively, and $s$ by $s^{\otimes 2}$,
we may assume that $A=B^2$, for some ample line bundle $B$
(note that $\BB(L\vert_{Z(s)})=\BB(L\vert_{Z(s^{\otimes 2})})$ by Lemma~\ref{lem5}). 
%We only need to show that $\BB(L)\subseteq\BB(L\vert_Z)$. 
Suppose that $P\not\in\BB(L\vert_Z)$. If $P\not\in Z$, then $P\not\in\SB(L\otimes
A^{-1})$; since $A$ is ample, we have $\BB(L)\subseteq \SB(L\otimes A^{-1})$, hence
$P\not\in \BB(L)$.  From now on we assume that $P$ lies in $Z$.

Arguing as in the proof of Theorem~\ref{thm_main2}, we find a section 
$$t\otimes t'\in H^0(Z, L^{rm}\otimes A^{-1}\vert_Z)$$
such that $P\not\in Z(t\otimes t')$, and we use Lemma~\ref{key_lemma} to deduce that for some $e\geq 1$, we can lift
$t^{\otimes p^e}\otimes {t'}^{p^e}\vert_{Z\cap X''}$ to a section 
$t''\in H^0(X'',L^{rmp^e}\otimes A^{-p^e}\vert_{X''})$. 
Recall that $X'\subseteq Z$, hence $X'\cap X''\subseteq Z\cap X''$, and therefore
$t''\vert_{X'\cap X''}=t^{\otimes p^e}\otimes {t'}^{\otimes p^e}\vert_{X'\cap X''}$.
Since $X$ is reduced, it follows that we can glue $t''$ and $t^{\otimes p^e}\otimes {t'}^{\otimes p^e}\vert_{X'}$ to a section in $H^0(X,L^{rmp^e}\otimes A^{-p^e})$ that does not vanish at $P$.
Therefore $P\not\in\BB(L)$, which concludes the proof.
\end{proof}

We now give the proof of the characteristic $p$ version of Nakamaye's theorem.

\begin{proof}[Proof of Theorem~\ref{thm_main}]
We argue as in the proof of Corollary~\ref{cor2_thm_Keel}. By Noetherian induction,
we may assume that the theorem holds for every proper closed subscheme of $X$.
Lemma~\ref{lem5} implies $\BB(L)=\BB(L\vert_{X_{\rm red}})$, and since $L^{\perp}=
(L\vert_{X_{\rm red}})^{\perp}$, we may assume that $X$ is reduced.

Note that the inclusion $L^{\perp}\subseteq\BB(L)$ is clear: if $V$ is a closed irreducible subset of $X$
that is not contained in $\BB(L)$, then we can find an ample line bundle $A$, a positive
integer $m$, and
$s\in H^0(X,L^m\otimes A^{-1})$ such that $V\not\subseteq Z(s)$. Therefore
$s\vert_V$ gives a nonzero section of $L^m\otimes A^{-1}\vert_V$, hence $L\vert_V$
is big. This shows that  it is enough to prove the inclusion $\BB(L)\subseteq L^{\perp}$. 

If the restriction of $L$ to all the irreducible components of $X$ is not big, then $L^{\perp}=X$, and the
assertion is clear. Otherwise, let $X'$ denote the union of the irreducible components of $X$
on which the restriction of $L$ is not big, and $X''$ the union of the other components, both with the reduced scheme structures.  It follows from 
Lemma~\ref{lem3} that 
given any ample line bundle $A$, we can find $m\geq 1$ and a section
$s\in H^0(X,L^m\otimes A^{-1})$ whose restriction to every component of $X''$ is nonzero
(and whose restriction to $X'$ is zero). Let $Z=Z(s)$. By assumption $X''$ is nonempty, and therefore
$Z$ is a proper closed subscheme of $X$, hence by the inductive assumption
we have $\BB(L\vert_Z)=(L\vert_Z)^{\perp}$. If $V\subseteq X$ is an irreducible closed subset such that $L\vert_V$ is not big,
then $V\subseteq Z$, hence $L^{\perp}=(L\vert_Z)^{\perp}$. 
On the other hand, Corollary~\ref{cor_thm_main2}  
gives $\BB(L)=\BB(L\vert_Z)$, and we conclude that $\BB(L)=L^{\perp}$.
\end{proof}

\providecommand{\bysame}{\leavevmode \hbox \o3em
{\hrulefill}\thinspace}

\end{document}